\def\o{\omega}
\def\O{\Omega}
\def\th{\theta}
\def\N{\mathbb{N}}
\def\B{{\mathbb B}}
\def\H{\mathcal H}
\def\R{\mathbb{R}}
\def\We{{\sf W}}
\def\W{{\mathfrak W}}
\def\e{{\sf e}}
\def\m{{\sf m}}
\def\wm{\widehat{\sf m}}
\def\im{\mathop{\mathsf{Im}}\nolimits}
\def\re{\mathop{\mathsf{Re}}\nolimits}
\def\({\left(}
\def\[{\left[}
\def\){\right)}
\def\]{\right]}
\def\d{\mathrm{d}}
\def\G{{\sf G}}
\def\wG{\widehat{\sf{G}}}
\def\p{\parallel}
\def\<{\langle}
\def\>{\rangle}
\newtheorem{Theorem}{Theorem}[section]
\newtheorem{Remark}[Theorem]{Remark}
\newtheorem{Lemma}[Theorem]{Lemma}
\newtheorem{Proposition}[Theorem]{Proposition}
\newtheorem{Definition}[Theorem]{Definition}
\newtheorem{Convention}[Theorem]{Convention}
 \numberwithin{equation}{section}
\begin{document}

\title{A Positive Quantization on Type I Locally Compact Groups}

\date{\today}

\author{M. M\u antoiu  \footnote{
\textbf{2010 Mathematics Subject Classification: Primary 46L80, 47G30, Secundary 22D10, 22D25.}
\newline
\textbf{Key Words:}  locally compact group, Plancherel theorem, coherent state, Berezin quantization
\newline
\textbf{Adress:} Departamento de Matem\'aticas, Facultad de Ciencias, Universidad de Chile,
Las Palmeras 3425, Casilla 653, Santiago, Chile, 
\emph{e-mail:} mantoiu@uchile.cl
}}



\maketitle
	
\begin{abstract}
Let $\G$ be a unimodular type I second countable locally compact group and $\wG$ its unitary dual. Motivated by a recent pseudo-differential calculus, we develop a positive Berezin-type quantization with operator-valued symbols defined on $\wG\times\G$\,.
\end{abstract}





\section*{Introduction}\label{duci}

Let $\G$ be a locally compact group with unitary dual $\wG$\,, composed of unitary equivalence classes of irreducible representations. We assume it to be second countable, unimodular and type I, in order to have a manageable Fourier theory. In \cite{MR} a pseudodifferential calculus $a\mapsto{\sf Op}(a)$ has been proposed and studied, involving globally defined operator-valed symbols $a$ defined on the product between the group and its unitary dual. Having its roots in some undevelopped remarks of \cite{Ta}, this generalizes recent works on pseudo-differential operators on compact Lie groups \cite{DR1,Fi,RT,RT1,RW} or nilpotent Lie groups \cite{FR1,FR2} (see also other references cited therein). When the group has no Lie structure, it is not compact and (especially) it is non-commutative, the formalism needs delicate constructions for which we send to \cite{MR}. 

Since the paradigmatic initial exemple in constructing pseudo-differential calculi is the case $\G=\mathbb R^n$, cf \cite{Fo} for instance, it is natural to think of a positive quantization by Berezin-Toeplitz operators and this is the topic of the present paper. We recall that the Canonical Commutation Relations on the "phase-space" $\mathbb R^n\times\mathbb R^n$ can be codified by a family of Weyl operators ${\sf W}(\cdot,\cdot)$\,, forming a projective representation of $\mathbb R^n\times\mathbb R^n$\,. This family is useful both for pseudodifferential operators and for the positive quantization. 

If the group $\G$ is unimodular, type I, separable but non-commutative, we have shown in \cite{MR} that a Weyl family of operators still exists and is effective for constructing the pseudo-differential theory. In this article we use them for a Berezin-type quantization. If $\G$ is not commutative, the dual $\wG$ is not a group but the Mackey Borel structure and the Plancherel measure allow for a manageable integration theory. But another technical difficulty comes from the fact that the irreducible representations $\xi\in\wG$ are no longer $1$-dimensional (they can even have infinite dimension), which requires various  new tools (direct integrals, traces, non-commutative $L^p$-spaces). In particular, if $\xi$ and $\eta$ are not unitarily equivalent, the basic operators ${\sf W}(x,\xi)$ and ${\sf W}(y,\eta)$ live in different Hilbert spaces and cannot be composed. Another consequence is the need for operator-valued symbols. So we are very far from a theory relying on group-theoretical means.

The literature on coherent states and their associated positive quantizations in a group setting is huge and we cannot try to describe it here. Let us cite \cite{AAG,AFK,CR,Fo,Fu,GMP,Ha,Pe,Wo}, which in their turn cite many other important references. The terminology, points of view and applications are very diverse. In particular we call our main objects Berezin operators or Bargman transformations although the framework is very general and no holomorphy or Gaussian functions are in view. A different terminology (anti-Wick, localization) can be advocated.

The first section contains some preparations, including conventions and notations that will be used over the paper. The second one recalls from \cite{MR} the construction of the family of Weyl operators and the associated Fourier-Wigner transformation. In section 3 we introduce coherent states and in section 4 we define and study Berezin-type operators. They are connected to pseudo-differential operators in section 5. The last section presents the Bargman realization of the formalism, in which the Berezin operators have a Toeplitz form.

\section{Some conventions and notations}\label{fartais}

For a given (complex, separable) Hilbert space $\H$\,, one denotes by $\mathbb B(\H)$ the $C^*$-algebra of all linear bounded operators in $\H$ 
and by $\mathbb K(\H)$ the closed bi-sided $^*$-ideal of all the compact operators. 
The Hilbert-Schmidt operators form a two-sided $^*$-ideal $\mathbb B^2(\H)$\,, which is also a Hilbert space with the scalar product $\<A,B\>_{\mathbb B^2(\H)}:={\rm Tr}\!\(AB^*\)$\,. This Hilbert space is unitarily equivalent to the Hilbert tensor product $\H\otimes\overline{\H}$\,, where $\overline{\H}$ is the Hilbert space opposite to $\H$\,. 

Let $\G$ be a unimodular locally compact group with unit $\e$ and fixed Haar measure $\m$\,. For $p\in[1,\infty]$\,, the Lebesgue spaces $L^p(\G)\equiv L^p(\G;\m)$ are defined with respect to the Haar measure. The norm and the scalar product in $\H=L^2(\G)$ will be denoted, respectively, by $\p\!\cdot\!\p$ and $\<\cdot,\cdot\>$\,.

All over the article we are going to assume that the locally compact group $\G$ is {\it admissible}, i.e. it is unimodular, second countable and type I. For the concept of type I locally compact group, for examples and other harmonic analysis concepts and results that we use below we refer to \cite{Di,Fo1,Fu}; see also \cite[Sect. 2]{MR}. 

An element of ${\rm Irrep}(\G)$ is a strongly continuous irreducible unitary Hilbert space representation $\pi:\G\rightarrow\mathbb B(\H_\pi)$\,. Identifying unitarily equivalent representation, we set $\,\wG:={\rm Irrep(\G)}/_{\cong}$ and call it {\it the unitary dual of $\,\G$}\,. It is endowed with a Borel structure, called {\it the Mackey Borel structure}. The main consequence of admissibility is the existence of a measure $\wm$ on the unitary dual $\wG$ for which a Plancherel Theorem holds, in connection to the Fourier transformation that will be introduced below.

If $\G$ is Abelian all the irreducible representations are $1$-dimensional, the unitary dual $\wG$ is the Pontryagin dual group (composed of characters) and $\wm$ is a Haar measure. In the non-commutative case $\wG$ has no group structure.

It is known that there is a canonical $\wm$-measurable field $\big\{\,\H_\xi\mid\xi\in\wG\,\big\}$ of Hilbert spaces  and a measurable section $\wG\ni\xi\mapsto\pi_\xi\in{\rm Irrep(\G)}$ such that $\pi_\xi:\G\rightarrow\mathbb B(\H_\xi)$ is an irreducible representation belonging to the class $\xi$\,. Instead of $\pi_\xi$ we will write $\xi$\,. Using this identification, one introduces the direct integral Hilbert space 
\begin{equation*}\label{andy}
\mathscr B^2(\wG):=\int_{\wG}^\oplus\!\B^2(\H_\xi)\,d\wm(\xi)\,\cong\int_{\wG}^\oplus\!\H_\xi\otimes\overline\H_\xi\,d\wm(\xi)\,,
\end{equation*}
with the scalar product
\begin{equation*}\label{justin}
\<\phi_1,\phi_2\>_{\mathscr B^2(\wG)}:=\int_{\wG}\,\<\phi_1(\xi),\phi_2(\xi)\>_{\mathbb B^2(\H_\xi)}d\wm(\xi)=\int_{\wG}{\rm Tr}_\xi\!\[\phi_1(\xi)\phi_2(\xi)^*\]d\wm(\xi)\,,
\end{equation*}
where ${\rm Tr}_\xi$ is the usual trace in $\mathbb B(\H_\xi)$\,. We also recall that the von Neumann algebra of decomposable operators $\mathscr B(\wG):=\int_{\wG}^\oplus\B(\H_\xi)\,d\wm(\xi)$ acts to the left and to the right in the Hilbert space $\mathscr B^2(\wG)$ in an obvious way.

{\it The Fourier transform} of $u\in L^1(\G)$ is given in weak sense by
\begin{equation*}\label{ion}
(\mathscr F u)(\xi):=\int_\G u(x)\pi_\xi(x)^*d\m(x)\equiv\int_\G u(x)\xi(x)^*d\m(x) \in\mathbb B(\H_\xi)\,.
\end{equation*} 
As a map $\,\mathscr F:L^1(\G)\rightarrow\mathscr B(\wG)$ it is linear, contractive and injective. It also defines a unitary isomorphism $\mathscr F:L^2(\G)\rightarrow \mathscr B^2(\wG)$\,; this is the mentioned generalization of Plancherel's Theorem to (maybe non-commutative) admissible groups. 

It is useful to note that for every $\xi\in\wG$ the space $L^2\big[\G;\mathbb B(\H_\xi)\big]$ has the structure of a Hilbert-module over the unital $C^*$-algebra
$\mathbb B(\H_\xi)$ with operation
\begin{equation*}\label{doih}
\mathbb B(\H_\xi)\times L^2\big[\G;\mathbb B(\H_\xi)\big]\ni(A,a)\mapsto (A\cdot a)(\cdot):=A a(\cdot)\in L^2\big[\G;\mathbb B(\H_\xi)\big]\,,
\end{equation*}
and $\mathbb B(H_\xi)$-valued inner product
\begin{equation*}\label{treih}
\<\cdot\!\mid\!\cdot\>_\xi:L^2\big[\G;\mathbb B(\H_\xi)\big]\times L^2\big[\G;\mathbb B(\H_\xi)\big]\to\mathbb B(\H_\xi)\,,\quad\<a\!\mid\!b\>_\xi:=\int_\G a(x)b(x)^*d\m(x)\,.
\end{equation*}
Note the relations
\begin{equation*}\label{haha}
\<A\cdot a\!\mid\! b\>_\xi=A\<a\!\mid\! b\>_\xi\,,\quad\<a\!\mid\! A\cdot b\>_\xi=\<a\!\mid\!b\>_\xi A^*,\quad\<a\!\mid\!b\>^*_\xi=\<b\!\mid\!a\>_\xi
\end{equation*}
valid for $a,b\in L^2\big[\G;\mathbb B(\H_\xi)\big]$ and $A\in\mathbb B(\H_\xi)$\,. By applying the definitions and the usual Cauchy-Schwartz inequality one obtains immediately
\begin{equation*}\label{cs}
\big\Vert \<a\!\mid\!b\>_\xi\big\Vert_{\mathbb B(\H_\xi)}\le\,\p\!a\!\p_{L^2[\G;\mathbb B(\H_\xi)]}\,\p\!b\!\p_{L^2[\G;\mathbb B(\H_\xi)]}\,.
\end{equation*}

Most often one works with $a=u\otimes A$ and $b=v\otimes B$ for $u,v\in L^2(\G)$ and $A,B\in\mathbb B(\H_\xi)$\,, where for example $(u\otimes A)(x):=u(x)A$ for every $x\in\G$\,. Then
\begin{equation}\label{then}
\<u\otimes A\!\mid\!v\otimes B\>_\xi=\<u,v\>AB^*.
\end{equation}
Later we are going to need a version of the Parseval identity for the $\mathbb B(\H_\xi)$-valued inner product. Let $\{w_k\}_{k}$ be an ortonormal base of the Hilbert space $L^2(\G)$ (it is separable, since $\G$ has been assumed second countable). Then, by \eqref{then} and the usual form of the Parseval identity in $L^2(\G)$ one has
$$
\begin{aligned}
\sum_k\,\<u\otimes A\!\mid\!w_k\otimes 1_\xi\>_\xi\,\<w_k\otimes 1_\xi\!\mid\!v\otimes B\>_\xi&=\sum_k\<u,w_k\>\<w_k,v\>AB^*\\
&=\<u,v\>AB^*=\<u\otimes A\!\mid\!v\otimes B\>_\xi\,.
\end{aligned}
$$
It follows easily from this identity that
\begin{equation}\label{hermes}
\sum_k\,\<a\!\mid\!w_k\otimes 1_\xi\>_\xi\,\<w_k\otimes 1_\xi\!\mid\!b\>_\xi=\<a\!\mid\!b\>_\xi\,,\quad\forall\,a,b\in L^2\big[\G;\mathbb B(\H_\xi)\big]\,.
\end{equation}

\begin{Convention}\label{convention}
{\rm Let us fix some $\xi\in\wG$\,. If $\,T\in\mathbb B\big[L^2(\G;\H_\xi)\big]$ one sets 
\begin{equation*}\label{ona}
T:L^2(\G)\to\mathbb B\big[\H_\xi,L^2(\G;\H_\xi)\big]\,,\quad (Tu)\varphi_\xi:=T(u\otimes\varphi_\xi)\,.
\end{equation*}
By a change of order of variables, this can be reinterpreted as
\begin{equation*}\label{dona}
Tu:\G\to\mathbb B(\H_\xi)\,,\quad[(Tu)(q)]\varphi_\xi:=\big[(Tu)\varphi_\xi\big](q)=\big[T(u\otimes\varphi_\xi)\big](q)\,.
\end{equation*}
It is not always true in this realization that $Tu\in L^2\big[\G;\mathbb B(\H_\xi)\big]$ for every $u\in L^2(\G)$\,, but this does happen below in some interesting cases. If, by this interpretation, one gets a bounded operator $T:L^2(\G)\to L^2\big[\G;\mathbb B(\H_\xi)\big]$\,, we say that $T$ is {\it a manageable operator}.

In such a situation one gets $\<Tu\!\mid\!v\otimes 1_\xi\>_\xi\in \mathbb B(\H_\xi)$ given by
\begin{equation*}\label{cincona}
\<Tu\!\mid\!v\otimes 1_\xi\>_\xi\,\varphi_\xi=\int_\G \,[(Tu)(y)]\varphi_\xi\,\overline{v(y)}\,d\m(y)=\int_{\G}\,[T(u\otimes\varphi_\xi)](y)\,\overline{v(y)}\,d\m(y)\in\H_\xi
\end{equation*}
and leading  for $u,v\in L^2(\G)$ and $\varphi_\xi,\psi_\xi\in\H_\xi$ to
\begin{equation}\label{tensiune}
\Big<\<Tu\!\mid\!v\otimes 1_\xi\>_\xi\,\varphi_\xi,\psi_\xi\Big>_{\!\H_\xi}\!=\big<T(u\otimes\varphi_\xi),v\otimes\psi_\xi\big>_{L^2(\G;\H_\xi)}\,.
\end{equation}
Then it is easy to check the useful formula
\begin{equation*}\label{formula}
\<Tu\!\mid\!v\otimes 1_\xi\>_\xi=\<u\otimes 1_\xi\!\mid\!T^*v\>_\xi\,,\quad u,v\in L^2(\G)\,.
\end{equation*}
}
\end{Convention}

\section{Weyl systems and the Fourier-Wigner transformation}\label{fourtein}

This Section is to a great extent a review of some constructions from \cite{MR}. Our definitions of the Weyl system and the Fourier-Wigner transformation are slightly different from those in \cite{MR}, but the properties are the same.

\begin{Definition}\label{siegmund}
For any $(\xi,x)\in\wG\times\G$ one defines {\rm the Weyl operator} ${\sf W}(\xi,x)$ in the Hilbert space $L^2(\G;\H_\xi)\equiv L^2(\G)\otimes\H_\xi$ by
\begin{equation*}\label{frieda}
\[\We(\xi,x)\Psi_\xi\]\!(q):=\xi(q)\big[\Psi_\xi(qx^{-1})\big]\,.
\end{equation*}
\end{Definition}

The terminology is inspired by the particular case $\G=\R^n$ \cite{Fo}. The operator $\We(\xi,x)$ is unitary, with adjoint 
\begin{equation*}\label{adjunctul}
\[\We(\xi,x)^*\Psi_\xi\]\!(q)=\xi(qx)^*\!\[\Psi_\xi(qx)\].
\end{equation*} 
In general $\We(\xi,x)$ cannot be composed with $\We(\eta,y)$ for $\xi\ne\eta$\,, because they act in different spaces. 

\begin{Lemma}\label{abil}
For every $(\xi,x)\in\mathscr X$ the operators $\,\We(\xi,x)$ and $\,\We(\xi,x)^*$ are manageable. 
\end{Lemma}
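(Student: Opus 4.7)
The plan is to unravel Convention~\ref{convention} and exploit the fact that the values $\xi(q)$ of the representation are unitary. First I would evaluate $\We(\xi,x)$ on a product vector $u\otimes\varphi_\xi$ using the definition directly:
$$\bigl[\We(\xi,x)(u\otimes\varphi_\xi)\bigr]\!(q)=\xi(q)\bigl[u(qx^{-1})\varphi_\xi\bigr]=u(qx^{-1})\,\xi(q)\varphi_\xi\,.$$
Read through the recipe of Convention~\ref{convention}, this identifies the induced operator-valued section as $\bigl(\We(\xi,x)u\bigr)(q)=u(qx^{-1})\,\xi(q)\in\B(\H_\xi)$, a scalar multiple of a unitary. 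The rest is essentially bookkeeping: since $\|\xi(q)\|_{\B(\H_\xi)}=1$, one has $\bigl\|\bigl(\We(\xi,x)u\bigr)(q)\bigr\|_{\B(\H_\xi)}=|u(qx^{-1})|$, and unimodularity of $\G$ gives
$$\int_\G\bigl\|\bigl(\We(\xi,x)u\bigr)(q)\bigr\|^2_{\B(\H_\xi)}\d\m(q)=\int_\G|u(qx^{-1})|^2\d\m(q)=\|u\|^2_{L^2(\G)}\,.$$
Hence $\We(\xi,x)$ maps $L^2(\G)$ isometrically into $L^2[\G;\B(\H_\xi)]$, and in particular is manageable.

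For the adjoint the argument is parallel: from the formula $[\We(\xi,x)^*\Psi_\xi](q)=\xi(qx)^*[\Psi_\xi(qx)]$ one reads off $\bigl(\We(\xi,x)^*u\bigr)(q)=u(qx)\,\xi(qx)^*$, whose $\B(\H_\xi)$-operator norm is $|u(qx)|$; the invariance of $\m$ under the right translation $q\mapsto qx$ then gives the same isometric estimate.

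I do not expect any substantial obstacle here, since the whole proof reduces, once the unitarity of $\xi(q)$ is noticed, to a translation-invariance computation. The one point where I would pause is to verify that the norm on $L^2[\G;\B(\H_\xi)]$ — whether interpreted via the Bochner expression $\int\|\cdot\|_{\B(\H_\xi)}^2\d\m$ or via the Hilbert module structure $\bigl\|\<\cdot\!\mid\!\cdot\>_\xi\bigr\|_{\B(\H_\xi)}$ of Section~\ref{fartais} — gives the same value in this situation. It does, because $\bigl(\We(\xi,x)u\bigr)(q)\bigl(\We(\xi,x)u\bigr)(q)^*=|u(qx^{-1})|^2\cdot 1_\xi$ is a scalar operator, so that $\<\We(\xi,x)u\!\mid\!\We(\xi,x)u\>_\xi=\|u\|^2_{L^2(\G)}\cdot 1_\xi$ and the two norms coincide.
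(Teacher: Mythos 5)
Your proposal is correct and follows essentially the same route as the paper's own proof: identify the operator-valued section $(\We(\xi,x)^{(*)}u)(q)$ explicitly, use unitarity of $\xi(\cdot)$ to reduce its $\mathbb B(\H_\xi)$-norm to $|u(\cdot)|$, and conclude by translation invariance of the Haar measure that the map is an isometry into $L^2[\G;\mathbb B(\H_\xi)]$. The only cosmetic difference is that the paper writes out the case of $\We(\xi,x)^*$ and leaves $\We(\xi,x)$ to the reader, whereas you do both; your closing remark reconciling the Bochner norm with the Hilbert-module norm is a harmless extra check.
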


\begin{proof}
Using the conventions of the preceding section, we deal with $\,\We(\xi,x)^*$; a similar argument works for $\,\We(\xi,x)$\,. For $u\in L^2(\G)$ one has
$$
\begin{aligned}
\p\!\We(\xi,x)^*u\!\p^2_{L^2[\G;\mathbb B(\H_\xi)]}&=\int_\G\p\!\big[\We(\xi,x)^*u\big](y)\!\p^2_{\mathbb B(\H_\xi)}\!d\m(y)\\
&=\int_\G\,\sup_{\p\varphi_\xi\p=1}\!\p\!\big[\We(\xi,x)^*(u\otimes\varphi_\xi)\big](y)\!\p^2_{\H_\xi}\!d\m(y)\\
&=\int_\G\,\sup_{\p\varphi_\xi\p=1}\!\p\!u(yx)\,\xi(yx)^*\varphi_\xi\!\p^2_{\H_\xi}\!d\m(y)\\
&=\int_\G\,|u(yx)|^2\sup_{\p\varphi_\xi\p=1}\!\p\!\xi(yx)^*\varphi_\xi\!\p^2_{\H_\xi}\!d\m(y)=\,\p\!u\!\p^2,
\end{aligned}
$$
so $\We(\xi,x)^*$ can be regarded as an isometry $:L^2(\G)\mapsto L^2[\G;\mathbb B(\H_\xi)]$\,.
\end{proof}

\begin{Definition}\label{sigemund}
For $(\xi,x)\in\wG\times\G$ and $u,v\in L^2(\G)$ one sets 
\begin{equation*}\label{leo}
\W_{u,v}(\xi,x):=\big<\We(\xi,x)^*u\!\mid\!v\otimes 1_\xi\big>_\xi=\big<u\otimes 1_\xi\!\mid\!\We(\xi,x)v\big>_\xi\in\mathbb B\big(\H_\xi\big)\,.
\end{equation*}
The map $(u,v)\mapsto\W_{u,v}\equiv\W(u\otimes v)$ is called {\rm the Fourier-Wigner transformation}.
\end{Definition}

By \eqref{tensiune}, an equivalent definition of $\W_{u,v}(\xi,x)$ is via
\begin{equation*}\label{suspinare}
\left<\W_{u,v}(\xi,x)\varphi_\xi,\psi_\xi\right>_{\H_\xi}=\big<\We(\xi,x)^*(u\otimes\varphi_\xi),v\otimes\psi_\xi\big>_{L^2(\G;\H_\xi)}\,,
\end{equation*}
valid for $u,v\in L^2(\G)\,,\,\varphi_\xi,\psi_\xi\in\H_\xi\,,\,(\xi,x)\in\mathscr X$\,. Explicitly one has
\begin{equation}\label{wilhelm}
\W_{u,v}(\xi,x)=\int_\G u(z)\,\overline{v(zx^{-1})\,}\xi(z)^*d\m(z)=\Big((\mathscr F\otimes{\sf id})\big[(u\otimes\overline v)\circ\vartheta\big]\Big)(\xi,x)\,,
\end{equation}
which, denoting by  $\gamma$ the composition with the change of variables $\gamma(z,x)=\big(z,zx^{-1}\big)$\,, can be written
\begin{equation*}\label{ostrogot}
\W_{u,v}=(\mathscr F\otimes{\sf id})\gamma(u\otimes \overline v)\,.
\end{equation*}
Since $\,\gamma:L^2(\G\times\G)\to L^2(\G\times\G)$ and $\,\mathscr F\otimes{\sf id}:L^2(\G\times\G)\to\mathscr B^2(\mathscr X)$ are unitary, one has

\begin{Proposition}\label{mataus} 
\,The mapping $(u,v)\to\W_{u,v}$ defines a unitary transformation (denoted by the same symbol) 
$$
\W:L^2(\G)\otimes \overline{L^2(\G)}\rightarrow\mathscr B^{2,2}(\mathscr X):= \mathscr B^2(\wG)\otimes L^2(\G)\,. 
$$ 
\end{Proposition}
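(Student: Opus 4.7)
The plan is to read the unitarity directly off the explicit decomposition
$$
\W_{u,v}=(\mathscr F\otimes{\sf id})\gamma(u\otimes \overline v)
$$
already displayed just before the statement. Concretely, I would present $\W$ as a composition of three maps and check separately that each is unitary.

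First I would dispense with the outer tensor identification: the assignment $(u,v)\mapsto u\otimes\overline v$ is by definition a unitary identification of $L^2(\G)\otimes\overline{L^2(\G)}$ with $L^2(\G)\otimes L^2(\G)\cong L^2(\G\times\G)$, the conjugate in the second slot being exactly what is needed to make scalar multiplication intertwine.

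Second, I would show that the substitution operator $\gamma$, defined by $(\gamma F)(z,x):=F(z,zx^{-1})$, is a unitary of $L^2(\G\times\G)$. This reduces to verifying that the diffeomorphism $(z,x)\mapsto(z,zx^{-1})$ preserves the product Haar measure $\m\otimes\m$. For each fixed $z$ the map $x\mapsto zx^{-1}$ is the composition of inversion with a left translation; unimodularity of $\G$ makes both invariances exact, so the map is measure-preserving on the second factor, and Fubini then lifts this to $\m\otimes\m$. Thus composition with $\gamma$ defines an isometric bijection, hence a unitary, on $L^2(\G\times\G)$.

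Third, I would invoke the Plancherel isomorphism recalled in Section~1: under the admissibility hypothesis, $\mathscr F:L^2(\G)\to\mathscr B^2(\wG)$ is unitary, so tensoring with the identity yields a unitary $\mathscr F\otimes{\sf id}:L^2(\G\times\G)\to\mathscr B^2(\wG)\otimes L^2(\G)=\mathscr B^{2,2}(\mathscr X)$. Composing the three unitaries gives the result. The only step that is not purely formal is the measure-preservation in Step~2, but for an admissible (in particular unimodular) group this is immediate; the genuinely substantive input, namely the operator-valued Plancherel theorem, is used as a black box from Section~1.
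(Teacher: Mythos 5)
Your proof is correct and follows essentially the same route as the paper, which establishes the proposition precisely by the factorization $\W=(\mathscr F\otimes{\sf id})\circ\gamma$ applied to $u\otimes\overline v$, observing that $\gamma$ and $\mathscr F\otimes{\sf id}$ are unitary on $L^2(\G\times\G)$ and onto $\mathscr B^{2,2}(\mathscr X)$ respectively. Your explicit verification that $(z,x)\mapsto(z,zx^{-1})$ preserves $\m\otimes\m$ via unimodularity is the one detail the paper leaves implicit.
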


For $p\in[1,\infty)$\,, one defines the Banach space $\mathscr B^p(\wG)$ as the family of measurable fields $\,f\equiv\big(f(\xi)\big)_{\xi\in\wG}\,$ for which $f(\xi)$ belongs to the Schatten-von Neumann class $\,\mathbb B^p(\H_\xi)$ for almost every $\xi$ and
\begin{equation*}\label{bp}
\p\!f\!\p_{\mathscr B^p(\wG)}\,:=\Big(\int_{\wG}\p\!f(\xi)\!\p_{\mathbb B^p(\H_\xi)}^p\!d\wm(\xi)\Big)^{1/p}<\infty\,.
\end{equation*}
It is shown in \cite{Fu2} that if $p\in[1,2]$ and $1/p+1/p'=1$ the Fourier transform $\mathscr F$ is a well-defined linear contraction from $L^p(\G)$ to $\mathscr B^{p'}\!(\wG)$\,. 

Then we introduce the Banach space $\mathscr B^{p,p}(\mathscr X):=L^p\big[\G;\mathscr B^p(\wG\,)\big]$ with the norm
\begin{equation*}\label{scarbos}
\begin{aligned}
\p\!F\!\p_{\mathscr B^{p,p}(\mathscr X)}\,:=\Big(\int_\G \p\!F(x)\!\p^p_{\mathscr B^{p}(\wG)}\!d\m(x)\Big)^{1/p}\!=\Big(\int_\G \Big[\int_{\wG}\p\!F(\xi,x)\!\p^p_{\mathbb B^{p}(\H_\xi)}\!d\wm(\xi)\Big]d\m(x)\Big)^{1/p}\!,
\end{aligned}
\end{equation*}
where the notation $\,F(\xi,x):=[F(x)](\xi)$ has been used. It can be shown that $\mathscr B^{1,1}(\mathscr X)\cong \mathscr B^1(\wG\,)\,\overline\otimes\,L^1(\G)$ (projective completed tensor product), $\mathscr B^{2,2}(\mathscr X)=\mathscr B^2(\wG\,)\otimes L^2(\G)$ (Hilbert tensor product) and $\mathscr B^{\infty,\infty}(\mathscr X)\cong\mathscr B^\infty(\wG\,)\,\widetilde\otimes\,L^\infty(\G)$ (von Neumann tensor product). The usual modification is required if $p=\infty$\,:
\begin{equation*}\label{required}
\p\!F\!\p_{\mathscr B^{\infty,\infty}(\mathscr X)}\,:=\underset{x\in\G}{\rm ess\,sup}\!\p\!F(x)\!\p_{\mathscr B^{\infty}(\wG)}\,=\underset{x\in\G,\,\xi\in\wG}{\rm ess\,sup}\p\!F(\xi,x)\!\p_{\mathbb B(\H_\xi)}.
\end{equation*}

\begin{Remark}\label{trecut}
{\rm The Banach spaces $\mathscr B^{p,p}(\mathscr X)$ have been put in \cite{MR} in the perspective of non-commutative $L^p$-spaces \cite{PX,Xu}\,; they are associated to a natural direct integral trace on the von Neumann algebra $\mathscr B^{\infty,\infty}(\mathscr X)$\,. Therefore, they have good interpolation and duality properties that will be used below. In particular, if $1/p+1/q=1$\,, $F\in\mathscr B^{p,p}(\mathscr X)$ and $H\in\mathscr B^{q,q}(\mathscr X)$\,, one defines 
\begin{equation*}\label{trabel}
\<F,H\>_{(\mathscr X)}:=\int_\mathscr X\!{\rm Tr}_\xi\big[F(X)H(X)^*\big]d\mu(X)
\end{equation*}
and get a sesquilinear form satisfying
\begin{equation*}\label{dracu}
\big|\<F,H\>_{(\mathscr X)}\big|\le\,\p\!F\!\p_{\mathscr B^{p,p}(\mathscr X)}\p\!H\!\p_{\mathscr B^{q,q}(\mathscr X)}\,.
\end{equation*}
}
\end{Remark}

\begin{Proposition}\label{matraus} 
\,For every $p\in[2,\infty]$ one has a linear contraction $\W:L^2(\G)\otimes \overline{L^2(\G)}\rightarrow\mathscr B^{p,p}(\mathscr X)$\,.
\end{Proposition}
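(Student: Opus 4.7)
The plan is to establish the two endpoint cases $p=2$ and $p=\infty$ and then invoke complex interpolation on the family of non-commutative $L^p$-spaces $\mathscr B^{p,p}(\mathscr X)$ mentioned in Remark \ref{trecut}.

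The case $p=2$ is already Proposition \ref{mataus}: there $\W$ is unitary, hence a fortiori a contraction. So the substantive new content is the $p=\infty$ bound. For this, I would fix $(\xi,x)\in\mathscr X$ and use the defining formula
$$\W_{u,v}(\xi,x)=\big<\We(\xi,x)^*u\!\mid\!v\otimes 1_\xi\big>_\xi\in\mathbb B(\H_\xi).$$
Combining the Cauchy-Schwarz inequality displayed just before Convention \ref{convention} with Lemma \ref{abil}, which says that $\We(\xi,x)^*$ is an isometry from $L^2(\G)$ into $L^2[\G;\mathbb B(\H_\xi)]$, one gets
$$\p\!\W_{u,v}(\xi,x)\!\p_{\mathbb B(\H_\xi)}\le\,\p\!\We(\xi,x)^*u\!\p_{L^2[\G;\mathbb B(\H_\xi)]}\p\!v\otimes 1_\xi\!\p_{L^2[\G;\mathbb B(\H_\xi)]}=\,\p\!u\!\p\p\!v\!\p,$$
since $\p v\otimes 1_\xi\p_{L^2[\G;\mathbb B(\H_\xi)]}=\p v\p$ because $\p 1_\xi\p_{\mathbb B(\H_\xi)}=1$. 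Taking the essential supremum in $(\xi,x)$ yields the desired $\mathscr B^{\infty,\infty}(\mathscr X)$ bound, and via the identification of $u\otimes\overline v$ with an element of the Hilbert tensor product $L^2(\G)\otimes\overline{L^2(\G)}$ of norm $\p u\p\p v\p$, this says exactly that $\W:L^2(\G)\otimes\overline{L^2(\G)}\to\mathscr B^{\infty,\infty}(\mathscr X)$ is a contraction on simple tensors, hence (by linearity and density) on the full tensor product.

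For the intermediate values $p\in(2,\infty)$, I would appeal to the interpolation structure recalled in Remark \ref{trecut}: the spaces $\big(\mathscr B^{p,p}(\mathscr X)\big)_{p\in[1,\infty]}$ are the non-commutative $L^p$-spaces associated to a direct integral trace on the von Neumann algebra $\mathscr B^{\infty,\infty}(\mathscr X)$, and these form a complex interpolation scale in the standard way (see \cite{PX,Xu}). Since $\W$ acts as a contraction into the two endpoints $\mathscr B^{2,2}(\mathscr X)$ and $\mathscr B^{\infty,\infty}(\mathscr X)$ from the fixed source Hilbert space $L^2(\G)\otimes\overline{L^2(\G)}$, Riesz-Thorin interpolation produces contractions into each intermediate $\mathscr B^{p,p}(\mathscr X)$.

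The main potential obstacle is strictly bookkeeping rather than conceptual: one must be sure that the isometry $\We(\xi,x)^*:L^2(\G)\to L^2[\G;\mathbb B(\H_\xi)]$ from Lemma \ref{abil}, which was verified pointwise in $(\xi,x)$, really does produce a jointly measurable field whose norm is essentially bounded by $\p u\p\p v\p$; this is immediate here because the bound is independent of $(\xi,x)$. With that observation in hand, the two endpoint estimates and the interpolation step assemble directly into the claim.
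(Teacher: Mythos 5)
Your two endpoint estimates are the right ones and are correct on simple tensors: the $p=2$ case is just Proposition \ref{mataus}, and the pointwise bound $\p\!\W_{u,v}(\xi,x)\!\p_{\mathbb B(\H_\xi)}\le\,\p\!u\!\p\p\!v\!\p$ obtained from the module Cauchy--Schwarz inequality together with Lemma \ref{abil} is exactly what yields the $\mathscr B^{\infty,\infty}(\mathscr X)$ estimate. (The paper itself only cites \cite[Sect. 3.4]{MR} for this Proposition, and endpoints plus interpolation is indeed the intended route.)

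The gap is the phrase ``hence (by linearity and density) on the full tensor product.'' A bound $\p\!\W(u\otimes\overline v)\!\p_{\mathscr B^{\infty,\infty}}\le\,\p\!u\!\p\p\!v\!\p$ on simple tensors controls the extension of $\W$ to the \emph{projective} tensor product $L^2(\G)\,\overline\otimes\,\overline{L^2(\G)}\cong\mathbb B^1\big[L^2(\G)\big]$, not to the Hilbert tensor product: the Hilbert cross-norm of $\sum_k u_k\otimes\overline{v_k}$ can be far smaller than $\sum_k\p\!u_k\!\p\p\!v_k\!\p$, so density does not transport the estimate. In fact the $p=\infty$ assertion on the Hilbert tensor product is false: since $\W$ is \emph{unitary} from that space onto $\mathscr B^{2,2}(\mathscr X)$ by Proposition \ref{mataus}, a bounded extension into $\mathscr B^{\infty,\infty}(\mathscr X)$ would force the inclusion $\mathscr B^{2,2}(\mathscr X)\subset\mathscr B^{\infty,\infty}(\mathscr X)$, which already fails for $\G=\R^n$, where it would read $L^2(\R^{2n})\subset L^\infty(\R^{2n})$. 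So the Proposition has to be understood as the bilinear estimate $\p\!\W_{u,v}\!\p_{\mathscr B^{p,p}}\le\,\p\!u\!\p\p\!v\!\p$ --- which is all that the proof of Theorem \ref{bondar} uses --- or equivalently as contractivity on the projective tensor product. With that reading your argument closes cleanly: fix $v=\o$ and interpolate the \emph{linear} map $u\mapsto\W_{u,\o}$ between $L^2(\G)\to\mathscr B^{2,2}(\mathscr X)$ (norm $\p\o\p$, from Proposition \ref{mataus}) and $L^2(\G)\to\mathscr B^{\infty,\infty}(\mathscr X)$ (norm $\le\p\o\p$, from your pointwise bound), using $\big[\mathscr B^{\infty,\infty}(\mathscr X),\mathscr B^{2,2}(\mathscr X)\big]_{2/p}=\mathscr B^{p,p}(\mathscr X)$; alternatively, interpolate with the projective tensor product as the fixed source. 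Your measurability remark needs no extra care, since $\W_{u,v}=(\mathscr F\otimes{\sf id})\gamma(u\otimes\overline v)$ is a measurable field by construction.
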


\begin{proof}
See \cite[Sect. 3.4]{MR} for the proof, with slightly different notations and conventions.
\end{proof}

\section{A family of projections}\label{firorton}

It will be convenient to use notations as $\,X=(\xi,x)\,,Y=(\eta,y)\in\mathscr X:=\wG\times\G$ and the product measure $\,d\mu(X)=d\m(x) d\wm(\xi)$ on $\mathscr X$. We go on applying Convention \ref{convention}.

\begin{Definition}\label{coyrent}
Let us fix an element $\o\in L^2(\G)$\,. For $X\in\mathscr X$ we define $\O(X):\H_\xi\to L^2(\G;\H_\xi)$ by
\begin{equation}\label{stass}
\big[\O(X)\varphi_\xi\big](z)\equiv\big[\O(X)(z)\big]\varphi_\xi:=\big[\We(X)(\o\otimes\varphi_\xi)\big](z)=\o(zx^{-1})\xi(z)\varphi_\xi\,.
\end{equation}
\end{Definition}

The family $\,\{\O(X)\mid X\in\mathscr X\}$ can be thought as a subset of $L^2\big[\G;\mathbb B(\H_\xi)\big]$\,, with 
\begin{equation}\label{stim}
\p\!\O(X)\!\p_{L^2[\G;\mathbb B(\H_\xi)]}\,=\,\p\!\o\!\p\,,\ \ \ \forall\,X\in\mathscr X.
\end{equation} 
Actually, admitting the interpretation $\,\We(\xi,x)\in L^2\big[\G;\mathbb B(\H_\xi)\big]$ for this manageable operator (cf. Convention \ref{convention} and Lemma \ref{abil}), we can write $\O(X)=\We(X)\o$\,.
By using \eqref{wilhelm}, \eqref{stass} and the inner product one sees that
\begin{equation}\label{legatura}
\W_{u,\o}(X)=\<u\otimes 1_\xi\!\mid\!\O(X)\>_\xi=\<\We(X)^*u\!\mid\!\o\otimes 1_\xi\>_\xi\,.
\end{equation}
The family $\,\{\O(X)\mid X\in\mathscr X\}$ could be called {\it the family of coherent states generated by the vector $\o$}\,. Note that in the non-commutative case these "coherent states" do not belong to the same space.

\begin{Remark}\label{florina}
{\rm When $\G$ is Abelian $\xi$ is a character, one has $\H_\xi=\mathbb C$ and $\mathscr B^{2,2}(\mathscr X)=L^2(\wG\times\G)$\,. Thus $\O(X)\equiv\o(X)\in\mathbb B\!\[\mathbb C;L^2(\G;\mathbb C)\]\cong L^2(\G)$ and we can write simply $\o(X)=\We(X)\o$\,; no special convention is needed and one has $\o\big(0_\G,0_{\wG}\big)=\o$\,. In such a familiar framework $\O(X)$ is a "traditional" coherent state and one has $\,\W_{u,\o}(\cdot)=\<u,\o(\cdot)\>$\,.
}
\end{Remark}

\begin{Definition}\label{schimb}
For every $X=(\xi,x)\in\mathscr X$\,, the operator $\,{\sf Pr}_\o(X):L^2(\G)\otimes\H_\xi\to L^2(\G)\otimes\H_\xi\,$ is given on elementary vectors $u\otimes\varphi_\xi\in L^2(\G)\otimes\H_\xi$ by
\begin{equation*}\label{note}
{\sf Pr}_\o(X)(u\otimes\varphi_\xi):=\We(X)\big[{\sf id}_{L^2(\G)}\otimes\<\We(X)^*u\!\mid\!\o\otimes 1_\xi\>_\xi\big](\o\otimes\varphi_\xi)\,.
\end{equation*}
\end{Definition}

Making use of the Fourier-Wigner transform, this can also be written
\begin{equation*}\label{notte}
{\sf Pr}_\o(X)(u\otimes\varphi_\xi)=\We(X)\big[\o\otimes\W_{u,\o}(X)\varphi_\xi\big]\,.
\end{equation*}

The full definition is still to be completed; see \eqref{generaliz}. This partial form is justified by the commutative case:
If $\G$ is Abelian, each $\H_\xi$ is one-dimensional, $\o(X)=\We(X)\o$ belongs to $L^2(\G)$ and ${\sf Pr}_\o(X)$ is the rank one projection associated to the vector $\o(X)$\,, i.e. 
\begin{equation*}\label{comabel}
{\sf Pr}_\o(X)u=\<u,\We(X)\o\>\,\We(X)\o\,.
\end{equation*}

A computation using the explicit form of $\We(X)$\,, $\W_{u,\o}$ and $\o(X)$ gives
$$
\begin{aligned}
\Big[{\sf Pr}_\o(X)(u\otimes\varphi_\xi)\Big](q)&=\Big(\We(X)\big[\o\otimes\,\W_{u,\o}(X)\varphi_\xi\big]\Big)(q)\\
&=\,\o(qx^{-1})\,\xi(q)\,\W_{u,\o}(X)\varphi_\xi\\
&=\,\o(qx^{-1})\,\xi(q)\!\int_\G u(y)\,\overline{\o(yx^{-1})}\,\xi(y)^*\varphi_\xi\,d\m(y)\\
&=\,[\O(X)](q)\!\int_\G u(y)\,[\O(X)](y)^*\varphi_\xi\,d\m(y)\\
&=\,[\O(X)](q)\!\int_\G\,[\O(X)](y)^*\big[(u\otimes\varphi_\xi)(y)\big]\,d\m(y)\,.
\end{aligned}
$$
This leads to the final full expresion of our operators: for each $\Phi_\xi\in L^2(\G;\H_\xi)$ one sets
\begin{equation}\label{generaliz}
\begin{aligned}
\Big[{\sf Pr}_\o(X)\Phi_\xi\Big](q)&=[\O(X)](q)\int_\G\,[\O(X)](y)^*\big[\Phi_\xi(y)\big]d\m(y)\\
&=\o(qx^{-1})\int_\G\,\overline{\o(yx^{-1})}\,\xi(qy^{-1})\big[\Phi_\xi(y)\big]\,d\m(y)\,.
\end{aligned}
\end{equation}

\begin{Proposition}\label{pozitiv}
For every $X\in\mathscr X$ the operator $\,{\sf Pr}_\o(X)$ is linear bounded and positive. One has
\begin{equation}\label{tristan}
{\sf Pr}_\o(X)^2=\,\p\!\o\!\p^2\!{\sf Pr}_\o(X)\,,
\end{equation}
\begin{equation*}\label{thebound}
\p\!{\sf Pr}_\o(X)\!\p_{\mathbb B[L^2(\G;\H_\xi)]}\,=\,\p\!\o\!\p^2.
\end{equation*}
In particular, if $\o$ is normalized, ${\sf Pr}_\o(X)$ is a self-adjoint projection.
\end{Proposition}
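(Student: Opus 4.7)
The key observation is that the formula \eqref{generaliz} expresses $\,{\sf Pr}_\o(X)$ as the composition $\O(X)\O(X)^*$, where $\O(X):\H_\xi\to L^2(\G;\H_\xi)$ is the operator introduced in Definition~\ref{coyrent}. Indeed, a direct pairing computation in $L^2(\G;\H_\xi)$ (a routine Fubini exchange, using that the pointwise adjoint in $\B(\H_\xi)$ is the pointwise adjoint of $\O(X)(z)$) shows that
$$
\O(X)^*\Phi_\xi=\int_\G [\O(X)(z)]^*\Phi_\xi(z)\,d\m(z)\in\H_\xi\,,
$$
and then applying $\O(X)$ to this vector reproduces exactly the right-hand side of \eqref{generaliz}.

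Once this factorization is established, everything reduces to two parallel one-line computations relying only on the fact that $\xi(q)$ is unitary and that the Haar measure is (right-)translation invariant. First, for each $\varphi_\xi\in\H_\xi$,
$$
\p\!\O(X)\varphi_\xi\!\p^2_{L^2(\G;\H_\xi)}=\int_\G|\o(qx^{-1})|^2\,\p\xi(q)\varphi_\xi\p^2\,d\m(q)=\p\!\o\!\p^2\p\!\varphi_\xi\!\p^2,
$$
so $\O(X):\H_\xi\to L^2(\G;\H_\xi)$ is bounded with $\p\!\O(X)\!\p=\p\!\o\!\p$ (this is essentially \eqref{stim} reinterpreted via Convention~\ref{convention}). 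Second, the very same calculation with $\xi(y)^*\xi(y)=I_{\H_\xi}$ inside the integral yields the orthogonality relation
$$
\O(X)^*\O(X)=\p\!\o\!\p^2\,{\sf id}_{\H_\xi}\,.
$$

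From ${\sf Pr}_\o(X)=\O(X)\O(X)^*$ the stated properties follow immediately. Positivity and self-adjointness are automatic from the factorization $AA^*$; the operator norm equals $\p\!\O(X)\!\p^2=\p\!\o\!\p^2$; and the projector-like identity \eqref{tristan} comes from inserting the formula for $\O(X)^*\O(X)$ in the middle:
$$
{\sf Pr}_\o(X)^2=\O(X)\bigl[\O(X)^*\O(X)\bigr]\O(X)^*=\p\!\o\!\p^2\,\O(X)\O(X)^*=\p\!\o\!\p^2\,{\sf Pr}_\o(X).
$$
If $\p\!\o\!\p=1$, this becomes ${\sf Pr}_\o(X)^2={\sf Pr}_\o(X)={\sf Pr}_\o(X)^*$, so ${\sf Pr}_\o(X)$ is an orthogonal projection. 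I do not anticipate any serious obstacle here; the only care required is in writing down the adjoint of $\O(X)$ in the right form, after which the proof is essentially an exercise in the algebra of rank-one-like operators in tensor notation.
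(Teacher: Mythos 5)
Your argument is correct, and it reaches the conclusion by a genuinely cleaner route than the paper's. The paper never isolates the factorization ${\sf Pr}_\o(X)=\O(X)\O(X)^*$ as an operator identity: it proves self-adjointness and positivity by rewriting the form $\big\<{\sf Pr}_\o(X)\Phi_\xi,\Psi_\xi\big\>$ as the $\H_\xi$-inner product of $\int_\G[\O(X)](y)^*\Phi_\xi(y)\,d\m(y)$ with $\int_\G[\O(X)](q)^*\Psi_\xi(q)\,d\m(q)$ --- which is your factorization in disguise --- and then establishes \eqref{tristan} by a second, independent double-integral computation in which Fubini and the invariance of the Haar measure produce the factor $\int_\G|\o(zx^{-1})|^2d\m(z)=\p\!\o\!\p^2$; the norm statement is then read off from \eqref{tristan} together with positivity. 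You instead promote the two ingredients to the operator identities ${\sf Pr}_\o(X)=\O(X)\O(X)^*$ and $\O(X)^*\O(X)=\p\!\o\!\p^2\,{\sf id}_{\H_\xi}$, after which positivity, self-adjointness, the identity \eqref{tristan} and the norm (via the $C^*$-identity $\p\!AA^*\!\p=\p\!A\!\p^2$) all drop out algebraically, with no need to recompute any integral for the square. The analytic content --- unitarity of $\xi(q)$ and translation invariance of $\m$, legitimate here because $\G$ is unimodular --- is the same in both proofs, but your version makes transparent that ${\sf Pr}_\o(X)$ is $\p\!\o\!\p^2$ times the orthogonal projection onto the range of $\O(X)$, a copy of $\H_\xi$ sitting inside $L^2(\G;\H_\xi)$, which the paper's kernel computations leave implicit. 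The one step that genuinely needs writing out is your formula for $\O(X)^*$, and it is the correct one.
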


\begin{proof}
Let $\Phi_\xi,\Psi_\xi\in L^2(\G;\H_\xi)$\,. One has
$$
\begin{aligned}
\big\<{\sf Pr}_\o(X)\Phi_\xi,&\Psi_\xi\big\>_{L^2(\G;\H_\xi)}=\int_\G\big\<\big[{\sf Pr}_\o(X)\Phi_\xi\big](q),\Psi_\xi(q)\big\>_{\H_\xi}d\m(q)\\
&=\int_\G\Big\<[\O(X)](q)\int_\G [\O(X)](y)^*\big[\Phi_\xi(y)\big]\,d\m(y),\Psi_\xi(q)\Big\>_{\!\H_\xi}d\m(q)\\
&= \Big\<\int_\G\,[\O(X)](y)^*\big[\Phi_\xi(y)\big]d\m(y),\int_\G\,[\O(X)](q)^*\big[\Psi_\xi(q)\big]\d\m(q)\Big\>_{\!\H_\xi}
\end{aligned}
$$
and self-adjointness follows immediately. Positivity also folows taking $\Phi_\xi=\Psi_\xi$\,.

A new computation, using the second form of ${\sf Pr}_\o(X)$ in \eqref{generaliz} this time, leads to
$$
\begin{aligned}
\Big({\sf Pr}_\o(X)&\big[{\sf Pr}_\o(X)\Phi_\xi\big]\Big)(q)=\,\o(qx^{-1})\int_\G \,\overline{\o(zx^{-1})}\,\xi(qz^{-1})\big[\big({\sf Pr}_\o(X)\Phi_\xi\big)(z)\big]\,d\m(z)\\
&=\,\o(qx^{-1})\int_\G\, \overline{\o(zx^{-1})}\,\xi(qz^{-1})\Big[\o(zx^{-1})\int_\G\,\overline{\o(yx^{-1})}\,\xi(zy^{-1})\big[\Phi_\xi(y)\big]\,d\m(y)\Big]d\m(z)\\
&=\,\o(qx^{-1})\int_\G \Big[\int_\G\,\overline{\o(zx^{-1})}\o(zx^{-1})\,d\m(z)\Big]\,\overline{\o(yx^{-1})}\,\xi(qy^{-1})\big[\Phi_\xi(y)\big]d\m(y)\\
&=\;\p\!\o\!\p^2\!\big[{\sf Pr}_\o(X)\Phi_\xi\big](q)\,.
\end{aligned}
$$
Thus we have \eqref{tristan} and clearly this finishes the proof.
\end{proof}

It is also useful to compute for $u,v\in L^2(\G)$\,, $X=(\xi,x)\in\mathscr X$ and $\varphi_\xi\in\H_\xi$
$$
\begin{aligned}
\<{\sf Pr}_\o(X)u\!\mid\!v\otimes 1_\xi\>_\xi\,\varphi_\xi&=\int_\G\,\big[{\sf Pr}_\o(X)(u\otimes\varphi_\xi)\big](z)\,\overline{v(z)}\,d\m(z)\\
&=\int_\G\,\overline{v(z)}\,[\O(X)](z)\,d\m(z)\!\int_\G u(y)[\O(X)](y)^*\varphi_\xi\,d\m(y)\,,
\end{aligned}
$$
which reads
\begin{equation}\label{reliable}
\<{\sf Pr}_\o(X)u\!\mid\!v\otimes 1_\xi\>_\xi=\<\O(X)\!\mid\!v\otimes 1_\xi\>_\xi\,\<u\otimes 1_\xi\!\mid\!\O(X)\>_\xi\,.
\end{equation}
Taking \eqref{legatura} into account, this can also be written
\begin{equation}\label{vartej}
\<{\sf Pr}_\o(X)u\!\mid\!v\otimes 1_\xi\>_\xi=\W_{v,\o}(X)^*\,\W_{u,\o}(X)\equiv\big(\W_{v,\o}^{\,*}\,\W_{u,\o}\big)(X)\,.
\end{equation}

\section{The Berezin quantization}\label{firton}

\begin{Definition}\label{uwe}
Let $\,\o\in L^2(\G)$ be fixed. We define formally, for any $F\in\mathscr B^{\infty,\infty}(\mathscr X)$\,, the operator in $L^2(\G)$
\begin{equation*}\label{albina}
{\sf Ber}_\o(F):=\int_{\mathscr X}\!{\rm Tr}_\xi\big[F(X){\sf Pr}_\o(X)\big]d\mu(X)
\end{equation*}
and call it {\rm the Berezin operator associated to the symbol $F$ and the vector $\o$}\,.
\end{Definition}

This should be taken in weak sense, i.e. for any $u,v\in L^2(\G)$ one sets
\begin{equation}\label{viespe}
\begin{aligned}
\big\<{\sf Ber}_\o(F)u,v\big\>:=&\int_{\mathscr X}\!{\rm Tr}_\xi\!\big[F(X)\,\big\<{\sf Pr}_\o(X)u\!\mid\!v\otimes 1_\xi\big\>_\xi\big]d\mu(X)\\
=&\int_{\mathscr X}\!{\rm Tr}_\xi\!\big[F(X)\,\W_{v,\o}(X)^*\,\W_{u,\o}(X)\big]d\mu(X)\,.
\end{aligned}
\end{equation}
This last expression, for which \eqref{vartej} has been used, can also be written
\begin{equation}\label{monstru}
\big\<{\sf Ber}_\o(F)u,v\big\>=\big\<F,\W_{u,\o}^{\,*}\,\W_{v,\o}\big\>_{(\mathscr X)}.
\end{equation}
Obviously, the correspondence $F\mapsto{\sf Ber}_\o(F)$ is linear. Using \eqref{viespe}, it is also straightforward to check that ${\sf Ber}_\o(F)^*={\sf Ber}_\o(F^\star)$\,, where $F^\star(X):=F(X)^*$ (adjoint in $\mathbb B(\H_\xi)$)\,.

\begin{Remark}\label{unnu}
{\rm Let us define $\,\mathbf 1\in\mathscr B^{\infty,\infty}(\mathscr X)$ by $\mathbf 1(\xi,x):=1_\xi$ (the identity operator in $\H_\xi$)\,. Then, by Proposition \ref{mataus},
$$
\big\<{\sf Ber}_\o(\mathbf 1)u,v\big\>=\int_{\mathscr X}\!{\rm Tr}_\xi\!\big[\W_{v,\o}(X)^*\,\W_{u,\o}(X)\big]d\mu(X)=\big\<\W_{u,\o}(X),\W_{v,\o}(X)\big\>_{(\mathscr X)}\!=\,\p\!\o\!\p^2\!\<u,v\>\,.
$$
Thus, if $\o$ is a normalized vector, we have ${\sf Ber}_\o(\mathbf 1)=1_{L^2(\G)}$ and one can write formally "the overcompleteness relation"
\begin{equation*}\label{overcompleteness}
\int_{\mathscr X}\!{\rm Tr}_\xi\big[{\sf Pr}_\o(X)\big]d\mu(X)=1_{L^2(\G)}\,.
\end{equation*}
}
\end{Remark}

We gather the most important properties of the Berezin operators in the next result. We say that $F\in\mathscr B^{p,p}(\mathscr X)$ is {\it positive} if for $\mu$-almost every $X\in\mathscr X$ the operator $F(X)\in\mathbb B^p(\H_\xi)\subset\mathbb B(\H_\xi)$ is positive. If $F\in\mathscr B^{1,1}(\mathscr X)$ we write $\mathbf{Tr}(F):=\int_\mathscr X\!{\rm Tr}_\xi\!\big[F(X)\big]d\mu(X)$\,. This is essentially the trace used in defining the non-commutative $L^p$-spaces $\big\{\mathscr B^{p,p}(\mathscr X)\mid p\in[1,\infty]\big\}$\,, cf. \cite{PX,Xu,MR}; it has a nice realization in terms of the Berezin quantization.

\begin{Theorem}\label{bondar}
For every $s\in[1,\infty]$ one has a linear bounded map ${\sf Ber}_\o:\mathscr B^{s,s}(\mathscr X)\to\mathbb B^s\big[L^2(\G)\big]$ satisfying
\begin{equation}\label{aceea}
\p\!{\sf Ber}_\o(F)\!\p_{\mathbb B^s[L^2(\G)]}\,\le\;4^{1/s}\p\!F\!\p_{\mathscr B^{s,s}}\,\p\!\o\!\p^2.
\end{equation} 
In particular, if $\,F\in\mathscr B^{1,1}(\mathscr X)$\,, then ${\sf Ber}_\o(F)$ is a trace-class operator with
\begin{equation*}\label{musca}
{\rm Tr}\big[{\sf Ber}_\o(F)\big]=\;\p\!\o\!\p^2\mathbf{Tr}(F)\,.
\end{equation*}
If $\,F\in\mathscr B^{s,s}(\mathscr X)$ is positive, then ${\sf Ber}_\o(F)$ is a positive operator in $L^2(\G)$\,.
\end{Theorem}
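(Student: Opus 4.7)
The strategy is to prove the endpoint bounds at $s=\infty$ (with constant $\p\!\o\!\p^2$) and at $s=1$ (with constant $4\p\!\o\!\p^2$), then interpolate on the non-commutative $L^s$-scale. The factor $4^{1/s}$ is precisely what such interpolation would produce, with the $4$ arising from a fibrewise Jordan decomposition of a general symbol into four positive pieces.

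For the $s=\infty$ bound, I would start from \eqref{monstru} and apply the duality of Remark \ref{trecut} with the pair $(\infty,1)$, obtaining $|\<{\sf Ber}_\o(F)u,v\>| \le \p\!F\!\p_{\mathscr B^{\infty,\infty}(\mathscr X)}\,\p\!\W_{u,\o}^{\,*}\W_{v,\o}\!\p_{\mathscr B^{1,1}(\mathscr X)}$. A pointwise Schatten Hölder inequality followed by Cauchy--Schwarz in $\wG$ and then in $\G$ yields $\p\!\W_{u,\o}^{\,*}\W_{v,\o}\!\p_{\mathscr B^{1,1}} \le \p\!\W_{u,\o}\!\p_{\mathscr B^{2,2}}\p\!\W_{v,\o}\!\p_{\mathscr B^{2,2}}$, and Proposition \ref{mataus} (the Plancherel-type identity) gives $\p\!\W_{u,\o}\!\p_{\mathscr B^{2,2}} = \p\!u\!\p\p\!\o\!\p$. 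Combining these one reaches $\p\!{\sf Ber}_\o(F)\!\p_{\mathbb B[L^2(\G)]} \le \p\!F\!\p_{\mathscr B^{\infty,\infty}}\p\!\o\!\p^2$.

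For $s=1$ I would first treat positive $F$. The positivity of ${\sf Ber}_\o(F)$ follows from the second line of \eqref{viespe} by cyclicity of the trace: $\<{\sf Ber}_\o(F)u,u\> = \int_{\mathscr X} \p\!\W_{u,\o}(X) F(X)^{1/2}\!\p_{\mathbb B^2(\H_\xi)}^{\,2}\, d\mu(X) \ge 0$. To compute the trace, I would pick an orthonormal basis $\{w_k\}$ of $L^2(\G)$ and sum the diagonal entries of ${\sf Ber}_\o(F)$, interchanging $\sum_k$ and $\int_\mathscr X$ by Tonelli (legitimate because every term is nonnegative). Using \eqref{legatura} each summand becomes $\<\O(X)\mid w_k\otimes 1_\xi\>_\xi\<w_k\otimes 1_\xi \mid \O(X)\>_\xi$, and the Hilbert-module Parseval identity \eqref{hermes} with $a=b=\O(X)$ collapses the sum to $\<\O(X)\mid\O(X)\>_\xi$. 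A direct computation from \eqref{stass}, exploiting unitarity of $\xi(z)$, shows $\<\O(X)\mid\O(X)\>_\xi = \p\!\o\!\p^2 \cdot 1_\xi$. Therefore ${\rm Tr}\big[{\sf Ber}_\o(F)\big] = \p\!\o\!\p^2\,\mathbf{Tr}(F)$; since $\mathbf{Tr}(F) = \p\!F\!\p_{\mathscr B^{1,1}}$ in the positive case, this is also the trace-class norm. For an arbitrary $F \in \mathscr B^{1,1}(\mathscr X)$, decompose $F = F_1 - F_2 + i(F_3 - F_4)$ fibrewise via the Jordan decomposition of the real and imaginary parts, with each $F_k\ge 0$ and $\p\!F_k\!\p_{\mathscr B^{1,1}} \le \p\!F\!\p_{\mathscr B^{1,1}}$; linearity then yields both the trace formula for general $F$ and the bound $\p\!{\sf Ber}_\o(F)\!\p_{\mathbb B^1[L^2(\G)]} \le 4\p\!\o\!\p^2\p\!F\!\p_{\mathscr B^{1,1}}$.

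Since $\{\mathscr B^{s,s}(\mathscr X)\}_{s\in[1,\infty]}$ is the non-commutative $L^s$-scale attached to the direct-integral trace $\mathbf{Tr}$ (Remark \ref{trecut}, \cite{PX,Xu,MR}) and $\{\mathbb B^s[L^2(\G)]\}$ is the corresponding Schatten scale, complex interpolation applied to the linear map ${\sf Ber}_\o$ produces the bound $(4\p\!\o\!\p^2)^{1/s}(\p\!\o\!\p^2)^{1-1/s} = 4^{1/s}\p\!\o\!\p^2$ for every $s\in[1,\infty]$. I expect the principal obstacle to be the pointwise identity $\sum_k \W_{w_k,\o}(X)^*\W_{w_k,\o}(X) = \p\!\o\!\p^2 \cdot 1_\xi$: this is the only step that genuinely invokes the Hilbert-module structure of Section \ref{fartais}, and the interchange of summation and integration it requires is cleanly licensed only in the positive case, which in turn forces the four-piece decomposition and hence the constant $4$ in the final estimate.
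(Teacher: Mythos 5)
Your proposal is correct and follows essentially the same route as the paper: an endpoint bound at $s=\infty$ via the non-commutative H\"older/duality inequalities and the unitarity of $\W$, positivity from \eqref{viespe} with $u=v$, the trace formula via an orthonormal basis and the Hilbert-module Parseval identity \eqref{hermes} together with $\<\O(X)\!\mid\!\O(X)\>_\xi=\p\!\o\!\p^2 1_\xi$, the four-term fibrewise decomposition to pass from positive to general $F$ at $s=1$, and complex interpolation to obtain $4^{1/s}$. The only (harmless) deviations are that you prove only the $s=\infty$ endpoint where the paper records the operator-norm bound for all $s$, and that you make explicit the Tonelli justification and the estimate $\p\!F_k\!\p_{\mathscr B^{1,1}}\le\p\!F\!\p_{\mathscr B^{1,1}}$, which the paper leaves implicit.
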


\begin{proof}
1. The non-commutative $L^p$-spaces $\mathscr B^{p,p}(\mathscr X)$ satisfy H\"older-type inequalities \cite{Fu2}, meaning that if $p,q,r\in[1,\infty]$ with $1/p+1/q=1/r$\,, then, with respect to pointwise multiplication, one has 
\begin{equation*}\label{poetic}
\mathscr B^{p,p}(\mathscr X)\,\mathscr B^{q,q}(\mathscr X)\subset\mathscr B^{r,r}(\mathscr X)\,,\quad\p\!FH\!\p_{\mathscr B^{r,r}}\,\le\,\p\!F\!\p_{\mathscr B^{p,p}}\p\!H\!\p_{\mathscr B^{q,q}}.
\end{equation*}
Note that for every $r\in[1,\infty]$ there exists $p,q\in[2,\infty]$ such that $1/p+1/q=1/r$ (for example $p=q=2r$)\,. Let us fix such a pair, assuming that $1/s+1/r=1$\,. By Proposition \ref{matraus} we have $\W_{u,w}\in\mathscr B^{p,p}(\mathscr X)$ and $\W_{v,w}^{\,*}\in\mathscr B^{q,q}(\mathscr X)$\,, so by the non-commutative H\"older inequality one gets $\W_{u,w}^{\,*}\,\W_{v,w}\in\mathscr B^{\,r,r}(\mathscr X)$\,. Then by \eqref{monstru}, Remark \ref{trecut} and Proposition \ref{matraus}
$$
\begin{aligned}
\p\!{\sf Ber}_\o(F)\!\p_{\mathbb B[L^2(\G)]}&=\sup_{\p u\p=1=\p v\p}\big\vert\big\<{\sf Ber}_\o(F)u,v\big\>\big\vert\\
&=\sup_{\p u\p=1=\p v\p}\big\vert\big\<F,\W_{u,w}^{\,*}\,\W_{v,w}\big\>_{(\mathscr X)}\big\vert\\
&\le\;\p\!F\!\p_{\mathscr B^{s,s}}\!\sup_{\p u\p=1=\p v\p}\!\p\!\W_{u,w}^{\,*}\,\W_{v,w}\!\p_{\mathscr B^{r,r}}\\
&\le\;\p\!F\!\p_{\mathscr B^{s,s}}\sup_{\p u\p=1}\!\p\!\W_{u,w}^{\,*}\!\p_{\mathscr B^{p,p}}\!\sup_{\p v\p=1}\!\p\!\W_{v,w}\!\p_{\mathscr B^{q,q}}\\
&\le\;\p\!F\!\p_{\mathscr B^{s,s}}\,\p\!\o\!\p^2.
\end{aligned}
$$
Thus a version of \eqref{aceea} has been obtained, but (essentially) weaker if $s\ne\infty$\,.

\medskip
2. Recall that the trace of the product of two positive operators is a positive number: if for instance $A=A_1^* A_1,B=B_1 B^*_1\in\mathbb B(\H_\xi)$\,, then 
$$
{\rm Tr}_\xi(AB)={\rm Tr}_\xi\big(A_1^* A_1 B_1 B_1^*\big)={\rm Tr}_\xi\big[(A_1 B_1)^* (A_1 B_1)\big]\ge 0\,.
$$
Then the statement about the positivity of the Berezin quantization follows from the formula \eqref{viespe} with $u=v$\,.

\medskip
3. To (partly) improve the estimations obtained at 1, we deal now with the trace class properties of the Berezin operator. If $\{w_k\}_{k\in\N}$ is an orthonormal basis in $L^2(\G)$\,, one has by \eqref{viespe}, \eqref{reliable}, the Parseval identity \eqref{hermes} and \eqref{stim}
$$
\begin{aligned}
{\rm Tr}\big[{\sf Ber}_\o(F)\big]&=\sum_k\big\<{\sf Ber}_\o(F)w_k,w_k\big\>\\
&=\sum_k\int_{\mathscr X}\!{\rm Tr}_\xi\!\Big[F(X)\,\big\<{\sf Pr}_\o(X)w_k\!\mid\!w_k\otimes 1_\xi\big\>_\xi\Big]d\mu(X)\\
&=\sum_k\int_{\mathscr X}\!{\rm Tr}_\xi\!\Big[F(X)\,\big\<\O(X)\!\mid\!w_k\otimes 1_\xi\big\>_\xi\,\big\<w_k\otimes 1_\xi\!\mid\!\O(X)\big\>_\xi\Big]d\mu(X)\\
&=\int_{\mathscr X}\!{\rm Tr}_\xi\!\Big[F(X)\,\sum_k\big\<\O(X)\!\mid\!w_k\otimes 1_\xi\big\>_\xi\,\big\<w_k\otimes 1_\xi\!\mid\!\O(X)\big\>_\xi\Big]d\mu(X)\\
&=\int_{\mathscr X}\!{\rm Tr}_\xi\!\Big[F(X)\,\big\<\O(X)\!\mid\!\O(X)\big\>_\xi\Big]d\mu(X)\\
&=\;\p\!\o\!\p^2\!\int_{\mathscr X}\!{\rm Tr}_\xi\!\big[F(X)\big]d\mu(X)\,.
\end{aligned}
$$
If $F$ is positive, we already know that ${\sf Ber}_\o(F)$ is also positive and its trace norm is computed above:
$$
\p\!{\sf Ber}_\o(F)\!\p_{\mathbb B^1[L^2(\G)]}\,={\rm Tr}\big[{\sf Ber}_\o(F)\big]=\,\p\!\o\!\p^2\mathbf{Tr}(F)=\,\p\!F\!\p_{\mathscr B^{1,1}}\,\p\!\o\!\p^2.
$$
One obtains the $s=1$ case of \eqref{aceea} for general $F$ by using the four terms pointwise decomposition valid in each $C^*$-algebra $\mathbb B(\H_\xi)$
$F(X)=\re[F(X)]_+-\re[F(X)]_-+i\im[F(X)]_+-i\im[F(X)]_-\,.$

\medskip
4. The general case in \eqref{aceea} then follows by interpolation from the cases $s=1$ and $s=\infty$\,, because, denoting by $\big[\mathscr Y,\mathscr Z\big]_\theta$ the space of complex interpolation of order $\th\in[0,1]$ of the interpolation pair $(\mathscr Y,\mathscr Z)$\,, one has \cite[Sect. 3.4]{MR}
$\Big[\mathscr B^{\infty,\infty}(\mathscr X),\mathscr B^{1,1}(\mathscr X)\Big]_{1/s}\!=\mathscr B^{s,s}(\mathscr X)$ and $\Big[\mathbb B\big[L^2(\G)],\mathbb B^{1}\big[L^2(\G)\big]\Big]_{1/s}\!=\mathbb B^{s}\big[L^2(\G)\big]$\,.
\end{proof}

\section{Connection with pseudo-differential operators}\label{tronol}

In \cite{MR} many pseudo-differential quantizations $a\mapsto{\sf Op}_{\rm L}^\tau(a)$ and $a\mapsto{\sf Op}_{\rm R}^\tau(a)$ have been introduced, labelled by measurable maps $\tau:\G\to\G$ which are linked to ordering issues. The indices L (left) and R (right) are extra choices connected to the fact that $\G$ is allowed to be non-commutative. We indicated how to pass from one quantization to another. This is also possible for the Berezin formalism, but we decided to treat here only one case. We are going to use here the quantization given formally by
\begin{equation*}\label{op}
[{\sf Op}(a)u](x):=\int_\G\int_{\wG}{\rm Tr}_\xi\big[\xi(x^{-1}y)a(y,\xi)\big]u(y)d\m(y)d\wm(\xi)\,.
\end{equation*}
Among others, it has been shown in \cite{MR} how to define ${\sf Op}$ rigorously as a unitary transformation 
\begin{equation*}\label{transformator}
{\sf Op}:\mathscr B^{2,2}(\G\times\wG):=L^2(\G)\otimes\mathscr B^2(\wG)\to\mathbb B^2\big[L^2(\G)\big]\,.
\end{equation*}
To achieve this one can use our Fourier-Wigner transformation, but it more straightford to define ${\sf Op}$ weakly by
\begin{equation*}\label{dead}
\<{\sf Op}(a)u,v\>:=\big\<a,\mathfrak V_{u,v}\big\>_{\mathscr B^{2,2}(\G\times\wG)}
\end{equation*}
in terms of the obvious scalar product on $\mathscr B^{2,2}(\G\times\wG)$ and {\it the Wigner transform} (connected to $\W_{u,v}$ essentially by a full Fourier transformation)
\begin{equation*}\label{can}
\mathfrak V_{u,v}:=[({\rm id}\otimes\mathscr F)\circ\gamma](\overline u\otimes v)\,,
\end{equation*}
where $[\gamma(g)](x,y)\equiv(g\circ\gamma)(x,y):=g(x,xy^{-1})\,,$ has alredy been used in section \ref{fourtein}.
More explicitly, one has
\begin{equation}\label{grahatt}
\mathfrak V_{u,v}(x,\xi)=\overline{u(x)}\int_\G v(xy^{-1})\xi(y)^*d\m(y)\,.
\end{equation}
Both the partial Fourier transformation and the change of variables operation $\gamma$ are unitary, so one can write
\begin{equation}\label{vocabular}
\<{\sf Op}(a)u,v\>=\big<[({\rm id}\otimes\mathscr F)\circ\gamma]^{-1}a,\overline u\otimes v\big>_{L^2(\G\times\G)}\,.
\end{equation}

On the other hand, let us compute for $F\in\mathscr B^{1,1}(\mathscr X)\cap\mathscr B^{2,2}(\mathscr X)$ and $u,v\in L^2(\G)$
$$
\begin{aligned}
\big\<{\sf Ber}_\o(F)u,v\big\>&=\int_{\mathscr X}\!{\rm Tr}_\xi\!\big[F(X)\,\mathfrak W_{v,\o}(X)^*\,\W_{u,\o}(X)\big]d\mu(X)\\
&=\int_{\mathscr X}\!{\rm Tr}_\xi\!\Big[F(X)\int_\G \overline{v(z)}\,\o(zx^{-1})\xi(z)d\m(z)\int_\G u(y)\,\overline{\o(yx^{-1})}\,\xi(y)^*d\m(y)\Big]d\mu(X)\\
&=\int_\G \int_\G u(y)\overline{v(z)}\,\Big\{\int_{\mathscr X}\!\o(zx^{-1})\,\overline{\o(yx^{-1})}\,{\rm Tr}_\xi\!\big[F(X)\xi(yz^{-1})^*\big]d\mu(X)\Big\}d\m(y)d\m(z)\,.
\end{aligned}
$$
In terms of the scalar product of $L^{2}(\G\times\G)$ this can be written
\begin{equation}\label{writan}
\big\<{\sf Ber}_\o(F)u,v\big\>=\big<K^F_\o,\overline u\otimes v\big>_{L^{2}(\G\times\G)}\,,
\end{equation}
for the kernel
\begin{equation}\label{formulla}
\begin{aligned}
K^F_\o(z,y):=&\int_{\G}\int_{\wG}\o(zx^{-1})\,\overline{\o(yx^{-1})}\,{\rm Tr}_\eta\big[F(\eta,x)\eta(yz^{-1})^*\big]d\m(x)d\wm(\eta)\,.
\end{aligned}
\end{equation}

Therefore, if one wants to have ${\sf Ber}_\o(F)={\sf Op}\big(a^F_\o\big)$\,, taking \eqref{vocabular} and \eqref{writan} into account, the relation
\begin{equation}\label{versionn}
a^F_\o=[({\rm id}\otimes\mathscr F)\circ\gamma]K^F_\o
\end{equation}
must hold. Replacing \eqref{formulla} in \eqref{versionn} one gets
\begin{equation*}\label{flabel}
\begin{aligned}
a^F_\o(q,\xi)
\!=\int_\G \Big\{\int_{\G}\int_{\wG}\o(qx^{-1})\,\overline{\o(qs^{-1}x^{-1})}\,{\rm Tr}_\eta\big[F(\eta,x)\eta(qs^{-1}q^{-1})^*\big]d\m(x)d\wm(\eta)\Big\}\,\xi(s)^*d\m(s)\,.
\end{aligned}
\end{equation*}
When $\G$ is Abelian, using additive notations in $\G$\,, multiplicative notations in the Pontryagin dual group $\wG$ and setting $F_\bullet(q,\xi):=F(\xi,q)$\,, this can be written as a convolution in $\G\times\wG$\,:
\begin{equation*}\label{ashwulah}
\begin{aligned}
a^F_\o(q,\xi)&=\!\int_{\G}\int_{\wG}F(\eta,x)\Big[\o(q-x)\!\int_\G\,\overline{\o(q-x-s)}\,\overline{(\eta^{-1}\xi)(s)}d\m(s)\Big]d\m(x)\d\wm(\eta)=(F_\bullet\ast\mathfrak V_{\overline\o,\overline\o})(q,\xi)\,.
\end{aligned}
\end{equation*}

\section{Toeplitz-like operators in the Bargmann representation}\label{mirlitronol}

\begin{Definition}\label{coybirent}
Let us fix an element $\o\in L^2(\G)$\,. 
Taking Proposition \ref{mataus} into account, we define $\,\W_{\o}:L^2(\G)\rightarrow \mathscr B^{2,2}(\mathscr X)$ by
\begin{equation*}\label{sambur}
\big[\W_{\o}(u)](X):=\W_{u,\o}(X)=\<\We(X)^*u\!\mid\!\o\otimes 1_\xi\>_\xi
\end{equation*}
and call it {\rm the generalized Bargmann transformation associated to the vector $\o$\,}.
\end{Definition}

Using \eqref{wilhelm} and \eqref{stass}, it is possible to write
\begin{equation*}\label{gristos}
\big[\W_{\o}(u)](X)=\int_\G u(z)\,\overline{\o(zx^{-1})\,}\xi(z)^*d\m(z)=\int_\G u(z)\,[\O(X)](z)^*d\m(z)\,.
\end{equation*}

\begin{Proposition}\label{ishen}
\begin{enumerate}
\item[(i)]
The adjoint $\W_{\o}^\dagger:\mathscr B^{2,2}(\mathscr X)\rightarrow L^2(\G)$ is given on $\mathscr B^{1,1}(\mathscr X)\cap\mathscr B^{2,2}(\mathscr X)$ by 
\begin{equation*}\label{bivol}
\big[\W_{\o}^\dagger(F)\big](z)=\int_\mathscr X\!{\rm Tr}_\xi\Big\{F(X)\,[\O(X)](z)\Big\}\,d\mu(X)\,.
\end{equation*}
\item[(ii)]
The final projection $\mathfrak P_\o=\W_{\o}\W_{\o}^\dagger$ of the isometry $\,\W_{\o}:L^2(\G)\rightarrow \mathscr B^2(\mathscr X)$ is given on $\mathscr B^{1,1}(\mathscr X)\cap\mathscr B^{2,2}(\mathscr X)$ by
\begin{equation*}\label{proy}
\big[\mathfrak P_\o(F)\big](X)=\int_\mathscr X{\rm TR}_\eta\Big\{p_\o(Y,X)\big[F(Y)\otimes 1_\xi\big]\Big\}\,d\mu(Y)\,,
\end{equation*}
where $X=(\xi,x)\,,\,Y=(\eta,y)$\,, 
$$
{\rm TR}_\eta:={\rm Tr}_\eta\otimes{\sf id}_\xi:\mathbb B(\H_\eta)\otimes\mathbb B(\H_\xi)\to\mathbb C\otimes\mathbb B(\H_\xi)\equiv\mathbb B(\H_\xi)
$$ 
is a "partial trace" and
\begin{equation*}\label{nicleu}
p_\o(Y,X):=\int_\G\,[\O(Y)](z)\otimes[\O(X)](z)^*d\m(z)\in\mathbb B(\H_\eta)\otimes\mathbb B(\H_\xi)\,.
\end{equation*}
\end{enumerate}
\end{Proposition}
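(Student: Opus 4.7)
Plan for the proof:

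For part (i), the strategy is a direct computation of the adjoint from the defining relation $\la \W_\o u,F\ra_{\mathscr B^{2,2}(\mathscr X)}=\la u,\W_\o^\dagger F\ra_{L^2(\G)}$. Using the explicit form $[\W_\o(u)](X)=\int_\G u(z)[\O(X)](z)^*d\m(z)$ together with the definition of the scalar product on $\mathscr B^{2,2}(\mathscr X)$ as a direct integral of Hilbert-Schmidt pairings, I would write
\begin{equation*}
\la \W_\o u,F\ra_{\mathscr B^{2,2}(\mathscr X)}=\int_\mathscr X{\rm Tr}_\xi\!\Big[\int_\G u(z)[\O(X)](z)^*d\m(z)\cdot F(X)^*\Big]d\mu(X),
\end{equation*}
swap the scalar integral with the trace (linearity), and then interchange the $z$-integral with the $\mu$-integral. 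The absolute integrability needed for Fubini is exactly what the extra assumption $F\in\mathscr B^{1,1}(\mathscr X)$ furnishes (combined with boundedness of $[\O(X)](z)$), so the swap is legitimate. Taking complex conjugates and reading off the scalar coefficient of $u(z)$ yields the stated formula for $\W_\o^\dagger F$.

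For part (ii), the approach is to compose the two maps and then recognize the result as the integral against $p_\o$ under the partial trace. Since $\W_\o$ is an isometry by Proposition \ref{mataus}, $\mathfrak P_\o=\W_\o\W_\o^\dagger$ is automatically the orthogonal projection onto its range, so the only task is the explicit formula. Substituting (i) into $[\W_\o(\W_\o^\dagger F)](X)=\int_\G[\W_\o^\dagger F](z)\,[\O(X)](z)^*d\m(z)$ gives, after another Fubini,
\begin{equation*}
[\mathfrak P_\o F](X)=\int_\mathscr X\!\int_\G{\rm Tr}_\eta\!\big[F(Y)[\O(Y)](z)\big]\,[\O(X)](z)^*d\m(z)\,d\mu(Y).
\end{equation*}

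The remaining step is purely algebraic: I reinterpret the scalar ${\rm Tr}_\eta[F(Y)[\O(Y)](z)]$ times the operator $[\O(X)](z)^*$ as the image under ${\rm TR}_\eta={\rm Tr}_\eta\otimes{\sf id}_\xi$ of the elementary tensor $[\O(Y)](z)F(Y)\otimes[\O(X)](z)^*$, which by cyclicity of ${\rm Tr}_\eta$ equals ${\rm TR}_\eta\{([\O(Y)](z)\otimes[\O(X)](z)^*)(F(Y)\otimes 1_\xi)\}$. Integrating the first tensor factor in $z$ produces $p_\o(Y,X)$, and pulling the partial trace and the $\mu(Y)$-integral outside (again by Fubini, justified by the trace-class assumption on $F$) yields the claimed expression.

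The main obstacle I anticipate is not the identities themselves but the justification of the Fubini exchanges and of the manipulations inside the partial trace: in part (ii) one has to ensure that the elementary-tensor rewriting and the interchange of ${\rm TR}_\eta$ with the $z$-integral are valid in the operator-norm sense on $\mathbb B(\H_\eta)\otimes\mathbb B(\H_\xi)$. This is why the statement is restricted to the dense subspace $\mathscr B^{1,1}(\mathscr X)\cap\mathscr B^{2,2}(\mathscr X)$: on this intersection the double integrals converge absolutely and the trace-class structure makes the partial trace well defined, after which the formulas extend to all of $\mathscr B^{2,2}(\mathscr X)$ by continuity thanks to Proposition \ref{mataus}.
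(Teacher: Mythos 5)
Your proposal is correct and follows essentially the same route as the paper: part (i) by unwinding the pairing $\la\W_\o(u),F\ra_{\mathscr B^{2,2}}$ with the explicit formula for $\W_{u,\o}$ and a Fubini interchange justified by $F\in\mathscr B^{1,1}(\mathscr X)$, and part (ii) by substituting (i) into $\W_\o\big(\W_\o^\dagger F\big)$ and rewriting the integrand as the partial trace ${\rm TR}_\eta$ of $\big([\O(Y)](z)\otimes[\O(X)](z)^*\big)\big(F(Y)\otimes 1_\xi\big)$ before integrating in $z$. The paper performs exactly these computations (with the Fubini and cyclicity justifications left implicit), so no further comparison is needed.
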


\begin{proof}
(i) For $F\in\mathscr B^{1,1}(\mathscr X)\cap\mathscr B^{2,2}(\mathscr X)$ we have
$$
\begin{aligned}
\big\<\W_{\o}(u),F\big\>_{\mathscr B^{2,2}}&=\int_\G\int_{\wG}\,{\rm Tr}_\xi\big\{\W_{u,\o}(\xi,x)F(\xi,x)^*\big\}d\m(x)d\wm(\xi)\\
&=\int_\G\int_{\wG}\,{\rm Tr}_\xi\Big\{\int_\G u(z)\,[\O(\xi,x)](z)^*d\m(z)\,F(\xi,x)^*\Big\}d\m(x)d\wm(\xi)\\
&=\int_\G u(z)\,\Big[\int_\G\int_{\wG}{\rm Tr}_\xi\big\{\,[\O(\xi,x)](z)^*\,F(\xi,x)^*\big\}d\m(x)d\wm(\xi)\Big]\,d\m(z)\\
&=\int_\G u(z)\,\Big[\,\overline{\int_\G\int_{\wG}{\rm Tr}_\xi\big\{\,F(\xi,x)\,[\O(\xi,x)](z)\big\}d\m(x)d\wm(\xi)}\Big]\,d\m(z)\,.
\end{aligned}
$$

(ii) One has
$$
\begin{aligned}
\big[\W_{\o}\big(\W_{\o}^\dagger F\big)\big](X)&=\W_{\W_{\o}^\dagger\Psi,\o}(X)=\int_\G \big(\W_{\o}^\dagger F\big)(z)\,[\O(X)](z)^*d\m(z)\\
&=\int_\G \Big[\int_\mathscr X\!{\rm Tr}_\eta\Big\{F(Y)[\O(Y)](z)\Big\}\,d\mu(Y)\Big]\,[\O(X)](z)^*d\m(z)\\
&= \int_\mathscr X\int_\G\,{\rm Tr}_\eta\Big\{F(Y)[\O(Y)](z)\Big\}[\O(X)](z)^*d\m(z)\,d\mu(Y)\\
&= \int_\mathscr X{\rm TR}_\eta\Big\{\big[F(Y)\otimes 1_\xi\big]\!\int_\G\,[\O(Y)](z)\otimes[\O(X)](z)^*d\m(z)\Big\}\,d\mu(Y)\,.
\end{aligned}
$$
\end{proof}

Since $\,\W_{\o}^\dagger\W_{\o}=1$\,, one has {\it the inversion formula} (valid almost everywhere)
\begin{equation*}\label{bufal}
u(q)=\int_\mathscr X\!{\rm Tr}_\xi\Big\{[\W_{\o}(u)](X)\,[\O(X)](q)\Big\}\,d\mu(X)\,.
\end{equation*}
and {\it the reproducing formula} $\W_{\o}(u)=\mathfrak P_\o\!\[\W_{\o}(u)\]$\,, i.e.
\begin{equation*}\label{taur}
\[\W_{\o}(u)\]\!(X)=\int_\mathscr X{\rm TR}_\eta\Big\{p_\o(Y,X)\big([\W_{\o}(u)](Y)\otimes 1_\xi\big)\Big\}\,d\mu(Y)\,.
\end{equation*}

\begin{Remark}\label{sturion} 
{\rm If $\G$ is Abelian $\mathscr P_\o(\mathscr X):=\mathfrak P_\o\!\[L^2(\wG\times\G)\]$ is a reproducing space with reproducing kernel $(X,Y)\mapsto p_\o(X,Y)=\<\O(Y),\O(X)\>$\,; it is composed of bounded continuous functions on $\wG\times\G$\,.}
\end{Remark}

We give now a Toeplitz-like form of the operator ${\sf T\!p}_\o(F):=\W_\o\circ{\sf Ber}_\o(F)\circ\(\W_\o\)^{\dag}$ living in $\mathscr B^{2,2}(\mathscr X)$\,.

\begin{Proposition}\label{carabus}
One has
\begin{equation}\label{ragace}
{\sf T\!p}_{\o}(F)=\mathfrak P_\o\circ{\sf Diag}_R(F)\circ \mathfrak P_\o\,,
\end{equation}
where ${\sf Diag}_R(F)$ is the diagonalizable operator in the Hilbert space $\mathscr B^{2,2}(\mathscr X)$\,, defined by pointwise right multiplication by $F\in\mathscr B^{\infty,\infty}(\mathscr X)$\,.
\end{Proposition}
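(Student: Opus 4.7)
The plan is to reduce the asserted identity on $\mathscr B^{2,2}(\mathscr X)$ to an identity on $L^2(\G)$, namely
\begin{equation*}
{\sf Ber}_\o(F)=\W_\o^\dagger\circ{\sf Diag}_R(F)\circ\W_\o\,,
\end{equation*}
and then conjugate by $\W_\o,\W_\o^\dagger$. Indeed, once this reduced identity is in hand, the isometry relation $\W_\o^\dagger\W_\o=1_{L^2(\G)}$ gives
\begin{equation*}
\mathfrak P_\o\circ{\sf Diag}_R(F)\circ\mathfrak P_\o=\W_\o\W_\o^\dagger\,{\sf Diag}_R(F)\,\W_\o\W_\o^\dagger=\W_\o\,{\sf Ber}_\o(F)\,\W_\o^\dagger={\sf T\!p}_\o(F)\,,
\end{equation*}
which is \eqref{ragace}.

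To establish the reduced identity, I would test it weakly against arbitrary $u,v\in L^2(\G)$. Using that $\W_\o^\dagger$ is the Hilbert-space adjoint of $\W_\o$ and that $[\W_\o u](X)=\W_{u,\o}(X)$, one has
\begin{equation*}
\big\<\W_\o^\dagger\,{\sf Diag}_R(F)\,\W_\o u,v\big\>=\big\<{\sf Diag}_R(F)\,\W_\o u,\W_\o v\big\>_{\mathscr B^{2,2}(\mathscr X)}=\int_{\mathscr X}{\rm Tr}_\xi\!\big[\W_{u,\o}(X)F(X)\,\W_{v,\o}(X)^*\big]d\mu(X)\,.
\end{equation*}
Cyclicity of the trace rewrites the integrand as ${\rm Tr}_\xi\!\big[F(X)\,\W_{v,\o}(X)^*\W_{u,\o}(X)\big]$, which by \eqref{viespe} is exactly $\<{\sf Ber}_\o(F)u,v\>$. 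This yields the reduced identity and thus \eqref{ragace}.

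The calculation is essentially unobstructed; the only points requiring attention are bookkeeping. First, for $F\in\mathscr B^{\infty,\infty}(\mathscr X)$ the fiberwise bound $\p AB\p_{\mathbb B^2(\H_\xi)}\le\p A\p_{\mathbb B^2(\H_\xi)}\p B\p_{\mathbb B(\H_\xi)}$ guarantees that ${\sf Diag}_R(F)$ is a bounded operator on $\mathscr B^{2,2}(\mathscr X)$, with norm at most $\p F\p_{\mathscr B^{\infty,\infty}}$, so both sides of \eqref{ragace} are honestly bounded operators. Second, the absolute integrability needed to apply Fubini/cyclicity inside $\int_{\mathscr X}{\rm Tr}_\xi[\cdot]\,d\mu$ follows from the H\"older-type bound used in the proof of Theorem \ref{bondar}, since $\W_{u,\o},\W_{v,\o}\in\mathscr B^{2,2}(\mathscr X)$ and $F\in\mathscr B^{\infty,\infty}(\mathscr X)$ places the product $\W_{v,\o}^{\,*}F\,\W_{u,\o}$ in $\mathscr B^{1,1}(\mathscr X)$. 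No step presents a serious obstacle; the content of the proposition is really just the naturality of ${\sf Ber}_\o$ as a ``compression'' of right multiplication through the Bargmann isometry.
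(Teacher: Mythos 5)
Your proposal is correct and follows essentially the same route as the paper: both reduce \eqref{ragace} to the equivalent identity ${\sf Ber}_\o(F)=\W_\o^{\dag}\circ{\sf Diag}_R(F)\circ\W_\o$ and verify it weakly against $u,v\in L^2(\G)$ using \eqref{monstru}/\eqref{viespe} together with cyclicity of the trace ${\rm Tr}_\xi$. Your added remarks on the boundedness of ${\sf Diag}_R(F)$ and the integrability justifying the manipulation under the integral are sound, though the paper leaves them implicit.
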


\begin{proof}
Clearly \eqref{ragace} is equivalent to ${\sf Ber}_\o(F)=\(\W_\o\)^{\dag}\!\circ{\sf Diag}_R(F)\circ\W_\o$\,. For $u,v\in L^2(\G)$ we have
$$
\begin{aligned}
\big\<{\sf Ber}_{\o}(F)u,v\big\>&=\big\<F,\W_\o(u)^*\,\W_\o(v)\big\>_{(\mathscr X)}\\
&=\int_\mathscr X\!{\rm Tr}_\xi\Big\{\big[\W_\o(u)\big](X)F(X)\big[\W_\o(v)\big](X)^*\Big\}\,d\mu(X)\\
&=\int_\mathscr X\!{\rm Tr}_\xi\Big\{\Big({\sf Diag}_R(F)\big[\W_\o(u)\big]\Big)(X)\,\big[\W_\o(v)\big](X)^*\Big\}\,d\mu(X)\\
&=\Big\<{\sf Diag}_R(F)\big[\W_\o(u)\big],\W_\o(v)\Big\>_{(\mathscr X)}\\
&=\Big\<\big[\(\W_\o\)^{\dag}\!\circ{\sf Diag}_R(F)\circ\W_\o\big]u,v\Big\>
\end{aligned}
$$
and the Proposition is proved.
\end{proof}

\vspace{\baselineskip}

\bf{Acknowledgement:} The author was supported by N\'ucleo Milenio de F\'isica Matem\'atica RC120002 and the Fondecyt Project 1120300.

\vspace{\baselineskip}

\end{document}